\numberwithin{table}{section}
\theoremstyle{plain}
\newtheorem{theorem}{Theorem}[section]
\newtheorem{lemma}[theorem]{Lemma}
\theoremstyle{definition} 
\newtheorem{remark}[theorem]{Remark}
\renewcommand{\ge}{\geqslant}
\renewcommand{\le}{\leqslant}
\newcommand{\PSL}{\textup{PSL}}
\newcommand{\PSU}{\textup{PSU}}
\newcommand{\Aa}{\textup{A}}
\newcommand{\Bb}{\textup{B}}
\newcommand{\Cc}{\textup{C}}
\newcommand{\Dd}{\textup{D}}
\newcommand{\Ee}{\textup{E}}
\newcommand{\Ff}{\textup{F}}
\newcommand{\Gg}{\textup{G}}
\def\@adminfootnotes{%
\let\@makefnmark\relax  \let\@thefnmark\relax
\ifx\@empty\@date\else \@footnotetext{\@setdate}\fi
\ifx\@empty\@subjclass\else \@footnotetext{\@setsubjclass}\fi
\ifx\@empty\@keywords\else \@footnotetext{\@setkeywords}\fi
\ifx\@empty\thankses\else \@footnotetext{%
\def\par{\let\par\@par}\@setthanks}%
\fi}\makeatother   
\begin{document}

\newcommand{\ord}{\mathrm{ord}}
\newcommand{\lcm}{\mathrm{lcm}}

\hyphenation{}

\title[Sylow subgroups of finite simple groups]{On the second-largest Sylow subgroup\\ of a finite simple group of Lie type}
\author{S.~P. Glasby, Alice C. Niemeyer, and Tomasz Popiel}

\address[S.~P. Glasby]{Centre for Mathematics of Symmetry and Computation, 
The University of Western Australia, 
35 Stirling Highway, Perth WA 6009, Australia. 
Also affiliated with The Department of Mathematics, University of Canberra, ACT 2601, Australia.
Email: {\tt Stephen.Glasby@uwa.edu.au; WWW: \href{http://www.maths.uwa.edu.au/~glasby/}{http://www.maths.uwa.edu.au/$\sim$glasby/} } }
\address[A.~C. Niemeyer]{Lehrstuhl B f\"ur Mathematik, 
Lehr- und Forschungsgebiet Algebra RWTH Aachen University, 
Pontdriesch 10-16, 52062 Aachen, Germany. \newline
Email: {\tt alice.niemeyer@mathb.rwth-aachen.de\newline
WWW: \href{https://wwwb.math.rwth-aachen.de/Mitarbeiter/niemeyer.php/}
{https://wwwb.math.rwth-aachen.de/Mitarbeiter/niemeyer.php/}}}
\address[T. Popiel]{School of Mathematical Sciences, 
Queen Mary University of London, 
Mile End Road, London E1 4NS, United Kingdom. 
Also affiliated with the Centre for Mathematics of Symmetry and Computation, 
The University of Western Australia, 
35 Stirling Highway, Crawley WA 6009, Australia. 
Email: {\tt tomasz.popiel@uwa.edu.au}.}

\date{\today}

\begin{abstract}
Let $T$ be a finite simple group of Lie type in characteristic $p$, and let $S$ be a Sylow subgroup of $T$ with maximal order. 
It is well known that $S$ is a Sylow $p$-subgroup except in an explicit list of exceptions, and that $S$ is always `large' in the sense that $|T|^{1/3} < |S| \le |T|^{1/2}$.
One might anticipate that, moreover, the Sylow $r$-subgroups of $T$ with $r \neq p$ are usually significantly smaller than $S$. 
We verify this hypothesis by proving that for every $T$ and every prime divisor $r$ of $|T|$ with $r \neq p$, the order of the Sylow $r$-subgroup of $T$ at most $|T|^{2\lfloor\log_r(4(\ell+1) r)\rfloor/\ell}=|T|^{{\rm O}(\log_r(\ell)/\ell)}$, where $\ell$ is the Lie rank of $T$. 
\end{abstract}

\maketitle 
\begin{center}{{\sc\tiny MSC 2010 Classification: 20D08, 20E32, 20E07}}
\end{center}

\section{Introduction} \label{S1}

Given a finite simple group $T$ of Lie type, it is natural to ask: what is the order of the largest Sylow subgroup of $T$? 
This question dates back at least to 1955 and the articles \cites{Artin1,Artin2} of Artin, who showed that if $T$ is a classical group in characteristic $p$
and $S$ is a Sylow subgroup of $T$ with maximal order, 
then $S$ is a Sylow $p$-subgroup of $T$ unless
\renewcommand{\labelitemi}{$\circ$}
\begin{itemize}
  \item $T \cong \PSL(2,p)$ with $p$ a Mersenne prime, 
  \item $T \cong \PSL(2,r-1)$ with $r$ a Fermat prime, 
  \item $T \cong \PSL(2,8)$, $T\cong \PSU(3,3)$ or $T \cong \PSU(4,2)$. 
\end{itemize}
In these cases, $S$ is a Sylow $s$-subgroup with $s=2$, $r$, $3$, $2$ or $3$, respectively. 
Artin's investigations were extended by Kimmerle {\em et~al.} \cite{KLST} in 1990 to the cases where $T$ has exceptional Lie type, with the conclusion that $S$ is always a Sylow $p$-subgroup.
Moreover, as one immediately observes upon inspecting the order formulae for the finite simple groups of Lie type \cite{Atlas}*{Table~6}, $S$ is always `large' (in both the classical and the exceptional cases), in the sense that $|S| > |T|^c$ for some constant $c$. 
Indeed one can take $c=1/3$ by~\cite{KLST}*{Theorem~3.5}. 
(On the other hand, $|S|\le|T|^{1/2}$ by~\cite{KLST}*{Theorem~3.6}.)

With the aforementioned question settled, it is natural to ask: how large are the {\em other} Sylow subgroups of $T$? 
Buekenhout~\cite{Buekenhout} approached this question by investigating the ratios $a_i = \log(p_1^{n_1})/\log(p_i^{n_i})$ in the prime factorisation $|T| = \prod_{i=1}^s p_i^{n_i}$, where the labelling is such that $p_1^{n_1} > p_2^{n_2} > \dots > p_s^{n_s}$. 
Specifically, he asked when $a_i \le \log(3)/\log(2)$, calling the corresponding primes $p_i$ {\em good contributors} to the order of $T$, and explaining that the choice of constant $\log(3)/\log(2)$ was in some sense arbitrary but motivated by computational evidence and applications in geometry. 
The conclusion was that if $T$ does not have Lie type $\Aa_1$, $\Aa_2$ or ${}^2\Aa_2$, then $|T|$ has at most one good contributor. 
More precisely, only $21$ such groups have a good contributor $r$ distinct from the characteristic; in all of these cases, $r \le 5$ by~\cite{Buekenhout}*{Theorem~4.1}. 
The cases where $T$ has type $\Aa_1$, $\Aa_2$ or ${}^2\Aa_2$ produce many further examples, and were left open. 

In addition to having geometric applications as alluded to in \cite{Buekenhout}, Buekenhout's result has also recently been used to study certain profinite groups \cite{CL}.


The purpose of this note is to prove the following result.

\begin{theorem} \label{mainThm}
  Let $q$ be a prime power, and let $T=T(q)$ be a finite simple group of
  Lie type, as listed in \textup{Table~\ref{mainTable}}. 
Let $r$ be a prime dividing $|T|$ but not $q$, and let $R$ be a Sylow $r$-subgroup of $T$. 
Then, for $K$ and $M$ as in \textup{Table~\ref{mainTable}}, we have
$|R| \le |T|^{(\lfloor\log_r(M)\rfloor+1)/K}$,
except in the cases listed in the final column of the table. 
\end{theorem}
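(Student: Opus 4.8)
The plan is to compute the $r$-part of $|T|$ essentially exactly, via cyclotomic polynomials, and then bound it crudely enough to collapse into a single uniform closed form. Since $r\ne p$ and $|T|=\tfrac1d\,q^N\prod_d\Phi_d(q)^{a_d}$ — where $N$ is the number of positive roots, $d$ is the order of the centre quotiented out in passing to the simple group, and $a_d$ is the multiplicity of the cyclotomic polynomial $\Phi_d$ in the generic order polynomial — the Sylow subgroup satisfies $|R|=r^{v_r(|T|)}$, with the $p$-power $q^N$ contributing nothing. Reading off the $a_d$ from the order formulae in \cite{Atlas}*{Table~6} for each row of Table~\ref{mainTable} is routine; I would record in particular the largest subscript $f$ with $a_f\ne0$ (the top invariant degree, which is linear in $\ell$), since this quantity controls everything below.

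The engine is the elementary theory of the multiplicative order $e=\ord_r(q)$ together with lifting-the-exponent. For odd $r$ one has $r\mid\Phi_d(q)$ precisely when $d=er^j$ for some $j\ge0$, and moreover $v_r(\Phi_e(q))=v_r(q^e-1)=:a$ while $v_r(\Phi_{er^j}(q))=1$ for every $j\ge1$. Consequently
\[
 v_r(|T|)\;=\;a_e\,a\;+\;\sum_{j\ge1}a_{er^j}\;-\;v_r(d),
\]
and the only subscripts that occur are $e,er,er^2,\dots,er^{\lfloor\log_r(f/e)\rfloor}$, giving at most $\lfloor\log_r(f/e)\rfloor+1\le\lfloor\log_r f\rfloor+1$ contributing levels. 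Because the largest exponent $f$ is bounded by a small linear function of $\ell$ (so that $f$, inflated by a book-keeping constant, is at most the quantity $M$ of Table~\ref{mainTable}), the number of levels is at most $\lfloor\log_r M\rfloor+1$; this is exactly the numerator in the statement, and it is the source of the $\log_r$ factor.

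It then remains to absorb the multiplicities $a_{er^j}$ and the leading valuation $a$. Here I would use $r^a=(q^e-1)_r\le q^e$ to trade each factor $r^a$ for $q^e$, bound each $a_{er^j}$ by $\lfloor f/(er^j)\rfloor$ (the number of invariant degrees divisible by $er^j$), and feed the resulting geometric sum into $|R|=r^{v_r(|T|)}$; comparing with $\log_q|T|=\dim G-\log_q d$ and solving for the exponent produces the displayed value $(\lfloor\log_r M\rfloor+1)/K$, with $K$ the constant read off from $\dim G$ in each row. The main obstacle is uniformity at the two ends of the classification: first, the prime $r=2$, where the primitive-prime-divisor dichotomy degenerates ($e\in\{1,2\}$, and both $\Phi_1(q)=q-1$ and $\Phi_2(q)=q+1$ carry a $2$-power, so the special $r=2$ form of lifting-the-exponent must replace the clean statement above); second, the twisted and exceptional types, whose generic orders involve $\Phi_d(q)$ with shifted subscripts — and, for the Suzuki and Ree families, factors governed by $\sqrt{2q}$ or $\sqrt{3q}$ rather than by $q$ alone — so that the identification of $e$, $f$, $M$, and $K$ must be carried out family by family. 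Finally, the crude steps $r^a\le q^e$ and $a_{er^j}\le\lfloor f/(er^j)\rfloor$ fail to yield the stated inequality only for a controlled list of small $q$ and small rank; I would isolate these by keeping the estimates effective and then verify the survivors directly (for instance by machine), and it is precisely this finite list that populates the final column of Table~\ref{mainTable}.
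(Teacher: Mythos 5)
Your proposal is correct in outline and takes essentially the same route as the paper: the cyclotomic factorisation of $d|T|$, the fact that $r$ divides $\Phi_i(q)$ only for indices $i=mr^j$ with $m=\ord_r(q)$ (the paper's Lemmas~\ref{lemma:pDividesTwoPhis} and~\ref{lemma:exponent}, which yield the same count of $\lfloor\log_r M\rfloor+1$ contributing factors), a per-factor bound of roughly $q^M\le|T|^{1/K}$ made effective by explicit estimates (Lemma~\ref{QboundClassical} for classical types, Table~\ref{T:Q} with thresholds $q_0$ for exceptional types), and direct verification for small $q$ producing the exceptions column. Your lifting-the-exponent computation of exact $r$-valuations is a local refinement that the paper avoids by crudely bounding the $r$-part of each contributing factor by the largest cyclotomic factor $Q(T)$ itself, which in particular spares the special treatment of $r=2$ that your version requires.
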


\begin{table}
  \caption{Data for the bound $|R| \le |T|^{(\lfloor\log_r(M)\rfloor+1)/K}$ in Theorem~\ref{mainThm}, and the Lie rank $\ell$ of each group $T$ as
    per \cite{KL}*{Tables~5.1.A and 5.1.B}.}
\label{mainTable}
\begin{tabular}{llllll}
\toprule
$T$ & $K$ & $M$ & $\ell$&Conditions on $T$ & Exceptions \\
\midrule
$\Aa_n(q)$ & $n$ & $n+1$ &$n$& $n \ge 1$, $(n,q) \not \in \{(1,2),(1,3)\}$ & \\
${}^2\Aa_n(q)$ & $n/2$ & $\hspace{-5pt} \begin{array}{l} 2(n+1),\;  2\mid n \\ 2n, \hskip11.4mm 2\nmid n \end{array}$ &$\lfloor\frac{n+1}{2}\rfloor$& $n \ge 2$, $(n,q) \neq (2,2)$ & \\
$\Bb_n(q)$ & $n$ & $2n$ &$n$& $n \ge 2$, $(n,q) \neq (2,2)$ & \\
$\Cc_n(q)$ & $n$ & $2n$ &$n$& $n \ge 3$, \textup{ $q$ odd} & \\
$\Dd_n(q)$ & $n/2$ & $2(n-1)$ &$n$& $n \ge 4$ & \\
${}^2\Dd_n(q)$ & $n/2$ & $2n$ &$n-1$& $n \ge 4$ & \\
\midrule
${}^2\Bb_2(q)$ & $2$ & $4$ &$1$& $q=2^{2m+1}$, $m \ge 1$ & \\
${}^3\Dd_4(q)$ & $6$ & $12$ &$2$& & $(q,r)=(3,13)$ \\
$\Ee_6(q)$ & $12$ & $12$ &$6$& & $(q,r)=(3,13)$ \\
${}^2\Ee_6(q)$ & $12$ & $18$ &$4$& & \\
$\Ee_7(q)$ & $18$ & $18$ &$7$& & \\
$\Ee_8(q)$ & $29$ & $30$ &$8$& & $(q,r)=(2,31)$ \\
$\Ff_4(q)$ & $12$ & $12$ &$4$& & $(q,r)=(3,13)$ \\
${}^2\Ff_4(q)'$ & $6$ & $12$ &$2$& $q=2^{2m+1}$, $m \ge 0$ & \\
$\Gg_2(q)$ & $6$ & $6$ &$2$& $q \ge 3$ & $(q,r)=(3,13)$ \\
${}^2\Gg_2(q)$ & $7/2$ & $6$ &$1$& $q=3^{2m+1}$, $m \ge 1$ & \\
\bottomrule
\end{tabular}
\end{table}

\begin{remark} \label{isomorphisms}
(i) The conditions listed in the fourth column of Table~\ref{mainTable} mitigate occurrences of isomorphisms between groups in different rows. 
In particular, note that $\Gg_2(2) \cong {}^2\Aa_2(3).2$, ${}^2\Gg_2(3) \cong \Aa_1(8).3$, and that ${}^2\Bb_2(2)$ is solvable.
Note also that we include the Tits group ${}^2\Ff_4(2)'$ in Theorem~\ref{mainThm}.
%

(ii) It follows from Table~\ref{mainTable} that for all $T$ we have $K \ge \ell/2$ and $M\le 4(\ell+1)$, where $\ell$ is the Lie rank of $T$ as
in \cite{KL}*{Tables~5.1.A and 5.1.B}. 
The upper bound on $|R|$ given in Theorem~\ref{mainThm} therefore implies the bound $|R| \le |T|^{2\lfloor\log_r(4(\ell+1) r)\rfloor/\ell}$ claimed in the abstract.

(iii) Theorem~\ref{mainThm} may be viewed as a refinement of \cite{Buekenhout}*{Theorem~4.1}, in the following sense. 
Let $S$ and $R$ denote the largest and second-largest Sylow subgroups of $T$, so that $|S|=p_1^{n_1}$ and $|R|=p_2^{n_2}$ in the above notation. 
As noted earlier, $|S| > |T|^c$ for some constant $c$. 
On the other hand, as noted above, Theorem~\ref{mainThm} implies that $|R| \le |T|^{c'\log_{p_2}(\ell)/\ell}$ for some constant $c'$, where $\ell$ is the Lie rank of $T$. 
We therefore obtain the following lower bound on $a_2$, which explains why Buekenhout's {\em good contributors} are so rare:
\[
a_2 = \frac{\log|S|}{\log|R|} > \frac{\ell c\log(|T|)}{c'\log_{p_2}(\ell)\log(|T|)} = \frac{c\ell}{c'\log_{p_2}(\ell)}.
\]

(iv) The questions considered here have also been investigated for the remaining nonabelian finite simple groups, namely the alternating groups and the $26$ sporadic simple groups. 
Precise answers can, of course, be obtained for the sporadic groups, and are recorded in \cite{KLST}*{Table~L.5} and \cite{Buekenhout}*{Section~2}. 
Orders of Sylow subgroups of the alternating group $\textup{Alt}_n$ may be computed using the classical formula of Legendre \cite{Legendre} for the prime factorisation of $n!$. 
As one might anticipate, $p_1=2$ and $p_2=3$ almost always, indeed unless $n \in \{5,6,7,9\}$ \cite{Buekenhout}*{Theorem~3.7}. 
Moreover, $p_1^{n_1}\le (n!/2)^{0.363}$ by~\cite{KLST}*{Table~L.4}.
\end{remark}

We now prove some preliminary lemmas in Section~\ref{S2}, before giving the proof of Theorem~\ref{mainThm} in Section~\ref{S3}.

\section{Supporting lemmas} \label{S2}

As in \cites{Artin1,Artin2,KLST}, we consider the cyclotomic factorisations
for the finite simple groups of Lie type, \emph{cf.}~\cite{KLST}*{Definition~4.4}. 
Writing $\Phi_i$ for the $i$th cyclotomic polynomial and $d$ for the number of diagonal outer automorphisms of $T=T(q)$, this factorisation has the form
\[
d|T|=q^{e_0}\prod_{i=1}^M\Phi_i(q)^{e_i},
\]
where $e_i \ge 0$ for $i \le M$, and $e_M>0$. We set $e_i = 0$ for $i>M$.
The values of $M$ and $e_0,e_1,\ldots,e_M$ can be deduced from the usual formulae for $|T|$ by noting that 
\[
q^i-1=\prod_{k\mid i}\Phi_k(q) \quad \text{and} \quad q^i+1=\prod_{k\mid 2i, k\nmid i}\Phi_k(q).
\]
They may be also obtained from~\cite{KLST}*{Definition 4.4 and Tables~C.1 and~C.2}.
The values of $M$ are listed in Table~\ref{mainTable}. 
Table~\ref{T:fact} lists the cyclotomic factorisations for the classical
groups, and duplicates the values of $M$ for ease of reference.


\begin{table}[!t]
\caption{Numbers $d$ of diagonal outer automorphisms, and cyclotomic factorisations $d|T|=q^{e_0}\prod_{i=1}^M\Phi_i(q)^{e_i}$ for finite simple classical groups $T$. The $q$ in $\Phi_i(q)$ are suppressed for brevity.}
\label{T:fact}
\begin{tabular}{llll} 
\toprule
\openup 4pt
$T$ & $d=\gcd(\cdot,\cdot)$ & cyclotomic factorisation of $d|T|$ & $M$\\ 
\midrule
$A_n$ & $(n+1,q-1)$ & ${\displaystyle q^{\binom{n+1}{2}}\,\Phi_1^n\prod_{i=2}^{n+1}\Phi_i^{\lfloor\frac{n+1}{i}\rfloor}}$ & $n+1$ \\[.5cm]
${}^2A_n$ & $(n+1,q+1)$ & 
$q^{\binom{n+1}{2}}\, \Phi_2^n \kern-4mm
\displaystyle\prod_{2 \neq i\equiv 2 (4)}
\kern-5mm
\Phi_i^{\lfloor\frac{2(n+1)}{\vphantom{l}i}\rfloor}
\displaystyle\prod_{i\not\equiv 2 (4)}\Phi_i^{\lfloor\frac{n+1}{\textup{lcm}(2,i)}\rfloor}$ 
& $\hspace{-5pt} \begin{array}{l} 2(n+1),\;  2\mid n \\ 2n, \hskip11.4mm 2\nmid n \end{array}$
\\[.5cm]
$B_n$, $C_n$ &$(2,q-1)$
&${\displaystyle q^{n^2}
\prod_{i=1}^{2n}\Phi_i^{\lfloor \frac{2n}{\textup{lcm}(2,i)}\rfloor}}$ & $2n$ \\[.5cm]
$D_n$&$(4,q^n-1)$
&$q^{n(n-1)}\!\!\!\!\!\displaystyle\prod_{i\,\nmid\, n \textup{ and } i\mid 2n}\!\!\!\!\!\Phi_i^{\frac{2n}{i}-1}\kern-2mm
\displaystyle\prod_{i\mid n\textup{ or } i\, \nmid\, 2n}\!\!\!\Phi_i^{\lfloor \frac{2n}{\textup{lcm}(2,i)}\rfloor}$  & $2(n-1)$\\[.5cm]
${}^2D_n$&$(4,q^n-1)$
&$q^{n(n-1)}\,\,\,\,\,\displaystyle\prod_{i\nmid n}\Phi_i^{\lfloor \frac{2n}{\textup{lcm}(2,i)}\rfloor}\,\displaystyle\prod_{i\,\mid\, n}\Phi_i^{\lfloor \frac{2n}{\textup{lcm}(2,i)}\rfloor-1}$
&$2n$\\ \bottomrule
\end{tabular}
\end{table}

We need the following lemma about cyclotomic polynomials.

\begin{lemma} \label{lemma:pDividesTwoPhis}
Let $i<j$ be integers, and suppose that $r$ is a prime dividing both $\Phi_i(q)$ and $\Phi_j(q)$ for some prime power $q$. 
Then $j/i = r^k$ for some positive integer $k$. 
In particular, $r$ divides $j$.
\end{lemma}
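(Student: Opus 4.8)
The plan is to reduce the statement to a single well-known fact about the $r$-divisibility of cyclotomic values: that $r \mid \Phi_n(q)$ forces $n/d$ to be a power of $r$, where $d$ is the multiplicative order of $q$ modulo $r$. First I would observe that $r \nmid q$, since $\Phi_n(q)$ divides $q^n-1$ for every $n \ge 1$, so a prime dividing $q$ cannot divide any $\Phi_n(q)$. Hence $q$ is a unit modulo $r$, and I set $d = \ord_r(q)$. Because $r$ divides $\Phi_i(q) \mid q^i-1$ and $\Phi_j(q) \mid q^j-1$, the order $d$ divides both $i$ and $j$.

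The crux is the claim that \emph{if $r \mid \Phi_n(q)$ then $n/d$ is a power of $r$} (allowing $r^0=1$). To prove it I would work with the $r$-adic valuation $v_r$ and the factorisation $q^n-1 = \prod_{e\mid n}\Phi_e(q)$, so that $v_r(q^n-1) = \sum_{e\mid n} v_r(\Phi_e(q))$. Only those $e$ with $d \mid e$ can contribute, since $r \mid \Phi_e(q)$ forces $d \mid e$; and for such $e = ds$ the condition $e \mid n$ is equivalent to $s \mid (n/d)$. Setting $c = v_r(q^d-1)$, one checks that $v_r(\Phi_d(q)) = c \ge 1$, because the proper divisors of $d$ contribute nothing (as $q$ has order $d$). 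For odd $r$, Lifting the Exponent gives $v_r(q^{dt}-1) = c + v_r(t)$, so writing $n = dt$ yields $\sum_{s\mid t} v_r(\Phi_{ds}(q)) = c + v_r(t)$ for all $t$. Möbius inversion then produces $v_r(\Phi_{ds}(q)) = c\,[s=1] + \Lambda(s)$, where $\Lambda(s) = 1$ if $s$ is a positive power of $r$ and $0$ otherwise; in particular $v_r(\Phi_{ds}(q)) = 0$ whenever $s$ has a prime factor different from $r$, which is exactly the claim.

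The prime $r = 2$ must be handled separately, since Lifting the Exponent fails in that case. Here $d = 1$, and since $q$ is odd one has $\Phi_n(q) \equiv \Phi_n(1) \pmod 2$, with $\Phi_n(1)$ even precisely when $n$ is a power of $2$; this gives the same conclusion directly.

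With the claim in hand, I write $i = d\,r^a$ and $j = d\,r^b$. Since $i < j$ we have $a < b$, so $j/i = r^{b-a}$ with $k := b-a \ge 1$, and $r \mid j$ because $b \ge 1$. The main obstacle is the crux claim, and within it the bookkeeping of the valuation identity together with the anomalous behaviour of $r = 2$; everything else is routine.
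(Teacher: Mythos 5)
Your proof is correct, but it takes a genuinely different route from the paper, whose entire proof is a citation: the authors invoke Dresden's theorem on resultants of cyclotomic polynomials (or Filaseta's Lemma~2), which states that for $i<j$ the resultant $\mathrm{Res}(\Phi_i,\Phi_j)$ equals $1$ unless $j/i$ is a prime power $p^k$, in which case it is a power of $p$; since a common prime divisor $r$ of $\Phi_i(q)$ and $\Phi_j(q)$ must divide this resultant, $j/i=r^k$ follows at once. You instead prove from scratch the classical fact that $r\mid\Phi_n(q)$ with $r\nmid q$ forces $n=dr^a$ where $d=\ord_r(q)$, and your execution checks out: the observation that $r\nmid q$ and hence $d\mid i$, $d\mid j$; the valuation identity $v_r(q^{dt}-1)=c+v_r(t)$ from lifting the exponent for odd $r$ (valid since $r\mid q^d-1$ and $r$ is odd); the M\"obius inversion giving $v_r(\Phi_{ds}(q))=c$ for $s=1$, $1$ for $s$ a positive power of $r$, and $0$ otherwise; the separate case $r=2$, where $d=1$ and $\Phi_n(q)\equiv\Phi_n(1)\pmod 2$ with $\Phi_n(1)$ even exactly when $n$ is a power of $2$; and the endgame $i=dr^a$, $j=dr^b$ with $b>a\ge 0$, so $j/i=r^{b-a}$ and $b\ge 1$ gives $r\mid j$. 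Your approach buys a self-contained elementary argument that in fact proves more than the lemma requires --- it determines the exact $r$-adic valuation of every $\Phi_n(q)$, not merely which indices $n$ can occur --- at the cost of the LTE and M\"obius machinery and the $r=2$ case split; the paper's one-line citation buys brevity by offloading precisely this bookkeeping to the literature.
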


\begin{proof}
This follows immediately from \cite{Dresden}*{Theorem~2} or \cite{Filaesta}*{Lemma~2}.
\end{proof}

The next lemma imposes an upper bound on the number of distinct cyclotomic polynomial factors of $|T|$ that can be divisible by a given prime distinct from the characteristic. 

\begin{lemma} \label{lemma:exponent}
Let $T=T(q)$ be a finite simple group of Lie type defined over a field of order $q$, and let $d|T|=q^{e_0}\prod_{i=1}^M\Phi_i(q)^{e_i}$ be the cyclotomic factorisation of $T$, where $d$ is the number of diagonal outer automorphisms of $T$.
If $r$ is a prime dividing $|T|$ but not $q$, then $r$ divides at most $\lfloor\log_r(M/m)\rfloor+1$ of the factors $\Phi_1(q)^{e_1},\dots,\Phi_M(q)^{e_M}$, where $m$ is the order of $q$ modulo $r$.
\end{lemma}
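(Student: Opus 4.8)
The plan is to count directly the indices $i \in \{1,\dots,M\}$ for which $e_i \ge 1$ and $r \mid \Phi_i(q)$, and to show that this count is at most $\lfloor\log_r(M/m)\rfloor+1$. I would label these indices $i_1 < i_2 < \dots < i_t$, so that the factors $\Phi_{i_1}(q)^{e_{i_1}},\dots,\Phi_{i_t}(q)^{e_{i_t}}$ are exactly those divisible by $r$; the whole task is then to bound $t$. (Factors with $e_i=0$ equal $1$ and are simply excluded from the count.)

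First I would pin down the smallest index $i_1$. Since $r \mid \Phi_{i_1}(q)$ and $\Phi_{i_1}(q) \mid q^{i_1}-1$, we have $r \mid q^{i_1}-1$, so the multiplicative order $m=\ord_r(q)$ divides $i_1$; in particular $i_1 \ge m$. That some index $i_1 \le M$ exists at all is guaranteed because $r \mid |T|$ and $r \nmid q$ force $r$ to divide $\prod_{i=1}^M\Phi_i(q)^{e_i}$, whence $m \le i_1 \le M$ and the quantity $\log_r(M/m)$ is nonnegative.

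Next I would exploit Lemma~\ref{lemma:pDividesTwoPhis} to control the gaps between consecutive indices. For each $a$, applying the lemma to the pair $i_a<i_{a+1}$ (both of which contribute a factor divisible by $r$) gives $i_{a+1}/i_a=r^{k_a}$ with $k_a$ a positive integer, hence $i_{a+1} \ge r\,i_a$. Iterating this bound along the chain yields $i_t \ge r^{t-1} i_1 \ge r^{t-1} m$. Since $i_t \le M$, this gives $r^{t-1} \le M/m$, and taking logarithms to base $r$ together with the integrality of $t-1$ produces $t-1 \le \lfloor\log_r(M/m)\rfloor$, i.e. $t \le \lfloor\log_r(M/m)\rfloor+1$, which is the claimed bound.

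The argument has no serious obstacle beyond correctly invoking its two inputs: the floor bound comes purely from the geometric growth $i_{a+1}\ge r\,i_a$ forced by Lemma~\ref{lemma:pDividesTwoPhis}, and the starting estimate $i_1\ge m$ from the definition of $m$ as the order of $q$ modulo $r$. The one point that needs care is that the lemma must be applied to the \emph{actual} consecutive indices appearing in the list, so that each ratio is a genuine positive power of $r$; discarding the indices with $e_i=0$ only increases $i_1$ and so can only help.
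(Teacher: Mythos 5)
Your proof is correct and takes essentially the same route as the paper: the paper likewise starts from the minimal divisible index (which it identifies as exactly $m$, while you only need $i_1\ge m$) and invokes Lemma~\ref{lemma:pDividesTwoPhis} to force the divisible indices into the geometric chain $m, mr,\dots,mr^k\le M$, yielding the bound $\lfloor\log_r(M/m)\rfloor+1$. Your explicit iteration $i_{a+1}\ge r\,i_a$ along consecutive indices is just a slightly more spelled-out version of the same counting argument.
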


\begin{proof}
Since $r$ divides $|T|$ but not $q$, it divides some factor $\Phi_i(q)^{e_i}$ of $d|T|$. Hence $e_i>0$.
Moreover, the minimal such $i$ is the order $m$ of $q$ modulo $r$. 
By Lemma~\ref{lemma:pDividesTwoPhis}, $r$ might also divide some or all of $\Phi_{mr}(q)^{e_{mr}},\ldots,\Phi_{mr^k}(q)^{e_{mr^k}}$, where $k$ is maximal such that $\Phi_{mr^k}(q)$ divides $|T|$, but $r$ cannot divide any of the other $\Phi_j(q)^{e_j}$. 
Since $mr^k\le M$, $r$ divides at most $1+k = 1+\lfloor\log_r(M/m)\rfloor$
of the factors $\Phi_j(q)^{e_j}$ of~$d|T|$. 
\end{proof}

The final lemma bounds the contribution of each cyclotomic polynomial factor to the order of a finite simple classical group.

\begin{lemma} \label{QboundClassical}
Let $T=T(q)$ be a finite simple classical group defined over a field of order $q$, as listed in \textup{Table~\ref{mainTable}} or \textup{Table~\ref{T:fact}}. 
Let $Q(T)$ denote the largest factor of the form $\Phi_i(q)^{e_i}$ dividing $d|T|$.
Then 
\[
  Q(T) \le |T|^{a/n}, \quad \text{where} \quad 
  a = \begin{cases}
  1 \quad& \text{if $T$ has type $\Aa_n$, $\Bb_n$ or $\Cc_n$,} \\
  2 & \text{if $T$ has type ${}^2\Aa_n$, $\Dd_n$ or ${}^2\Dd_n$}.
  \end{cases}
\]
\end{lemma}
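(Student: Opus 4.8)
The plan is to prove the equivalent inequality $Q(T)^n \le |T|^a$ by bounding $Q(T)$ from above and $|T|$ from below, each by a power of $q$ (times controllable constants), and then comparing exponents.

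For the upper bound I would use that $\Phi_i(q)$ divides $q^i-1$, so $\Phi_i(q)\le q^i-1<q^i$ and hence $\Phi_i(q)^{e_i}<q^{ie_i}$ for every $i$; writing $B:=\max_i ie_i$ this gives $Q(T)<q^{B}$. The value of $B$ is read off from the cyclotomic factorisations in Table~\ref{T:fact}: in each case $ie_i$ is governed by a floor expression, and since $\lfloor x/i\rfloor\le x/i$ one checks that $B=n+1$ for type $\Aa_n$ (the extreme coming from $\Phi_{n+1}$), $B=2n$ for $\Bb_n$ and $\Cc_n$ (from an even $i$ dividing $2n$), $B\le 2n$ for $\Dd_n$ and ${}^2\Dd_n$, and $B\le 2(n+1)$ for ${}^2\Aa_n$ (from an $i\equiv 2\pmod 4$, $i\neq 2$, dividing $2(n+1)$).

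For the lower bound I would work from the standard factored order formula $|T|=\tfrac1d q^{e_0}\prod_{j=1}^{n}(q^{d_j}\pm1)$, which has exactly $n$ factors of the shape $q^{d_j}\pm1$. Since $q^{d_j}\pm1\ge(q-1)q^{d_j-1}$ in every case, multiplying gives the uniform bound
\[
|T|\ge \frac{(q-1)^n}{d}\,q^{\delta-n},\qquad \delta:=\deg_q\!\big(d|T|\big).
\]
Combining $Q(T)^n<q^{nB}$ with $|T|^a\ge (q-1)^{an}d^{-a}q^{a(\delta-n)}$, the target inequality reduces to $d^{a}\,q^{\,an-(a\delta-nB)}\le(q-1)^{an}$. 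A short computation of the slack $a\delta-nB$ from the dimensions $\delta\in\{n^2+2n,\,2n^2+n,\,2n^2-n\}$ shows that $an=a\delta-nB$ for $\Aa_n$, ${}^2\Aa_n$, $\Bb_n$, $\Cc_n$, so the exponent of $q$ vanishes and the condition collapses to $d\le(q-1)^n$; for $\Dd_n$ and ${}^2\Dd_n$ the exponent $an-(a\delta-nB)=-2n(n-2)$ is strongly negative (here $n\ge 4$) and the inequality is immediate.

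It then remains to verify $d\le(q-1)^n$. For $\Aa_n$, $\Bb_n$, $\Cc_n$ one has $d\mid q-1$, whence $d\le q-1\le(q-1)^n$, and the excluded pairs $(n,q)\in\{(1,2),(1,3)\}$ (where $T$ is not simple) are harmless. The main obstacle is type ${}^2\Aa_n$, where $d\mid q+1$ rather than $q-1$: the bound $(q-1)^n\ge(q-1)^2\ge q+1\ge d$ still holds once $q\ge 3$ (using $n\ge2$), but for $q=2$ one can have $d=\gcd(n+1,3)=3>1=(q-1)^n$, so the crude estimate fails. I expect this to be the delicate point, to be settled by sharpening the per-factor bound to $\prod_j(q^{d_j}\pm1)>q^{\sum d_j}\prod_{k\ge1}(1-q^{-k})>\tfrac14\,q^{\sum d_j}$, which yields $|T|>\tfrac{1}{4d}q^{\delta}$ and hence reduces the problem to $(4d)^a\le q^{a\delta-nB}$, i.e.\ $16d^2\le q^{2n}$. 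Since $d=3$ forces $3\mid n+1$ and therefore $n\ge5$ (the case $(n,q)=(2,2)$ being excluded), one has $q^{2n}=4^{n}\ge 4^{5}>144=16d^2$, while $d=1$ needs only $q^{2n}\ge16$; so the genuine factor $q^{2n}$ of slack recovered from the product estimate closes the gap.
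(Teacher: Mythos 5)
Your proof is correct, and while it rests on the same underlying mechanism as the paper's argument (bound $Q(T)$ above by a power of $q$, bound $|T|$ below by a power of $q$ times a controlled constant, compare exponents), your execution is genuinely different. The paper proves the lemma in four separate cases ($\Aa_n$; ${}^2\Aa_n$; $\Bb_n,\Cc_n$; $\Dd_n,{}^2\Dd_n$), plus a separate computation for $n=1$, in each case invoking the Neumann--Praeger estimate \eqref{omega} to control $\prod_j(1\pm q^{-d_j})$ analytically. You instead run a single uniform reduction: the elementary per-factor bound $q^{d_j}\pm 1\ge (q-1)q^{d_j-1}$ gives $|T|\ge d^{-1}(q-1)^n q^{\delta-n}$ across all six families at once (each order formula indeed has exactly $n$ factors of that shape, including $n=1$), so the whole lemma collapses to the one inequality $d^a q^{\,an-(a\delta-nB)}\le (q-1)^{an}$. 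Your values $B=n+1$, $\le 2(n+1)$, $2n$, $\le 2n$ read off Table~\ref{T:fact} are right, and the slack computations check out: $an=a\delta-nB$ for $\Aa_n$, ${}^2\Aa_n$, $\Bb_n$, $\Cc_n$, and slack $2n(n-2)>0$ for the $\Dd$ types. This bookkeeping makes transparent what the paper's case analysis obscures, namely that the only delicate configuration is ${}^2\Aa_n$ with $q=2$ and $d=\gcd(n+1,3)=3$, where $d\nmid q-1$; and you repair it correctly with the sharper estimate $\prod_{k\ge 1}(1-q^{-k})>1/4$ --- precisely the inequality \eqref{omega} that the paper deploys everywhere --- noting that $3\mid n+1$ together with the exclusion of $(n,q)=(2,2)$ forces $n\ge 5$, whence $16d^2=144\le 4^n$. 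What the paper's route buys is slightly sharper intermediate constants (e.g.\ $Q(T)\le q^{n+1}/(q-1)$ in type $\Aa_n$); what yours buys is uniformity, a single consolidated inequality in place of four bespoke cases, minimal use of the analytic product estimate (confined to one pinch point), and an explicit identification of exactly where the bound is tight.
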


\begin{proof}
Suppose first that $n=1$. 
Then $T=\Aa_1(q)$ (with $q\ge4$) and $d|T|=q(q^2-1)$, so $Q(T)=\Phi_2(q)$. 
The desired bound is $Q(T)\le|T|$, and this holds because 
\[
|T|\ge\frac{q(q^2-1)}{2}= \frac{1}{2}q(q-1)(q+1) \ge q+1=\Phi_2(q).
\]
Suppose from now on that $n\ge2$. 
We need the following inequality, which follows from \cite{NP}*{Lemma 3.5}:
\begin{equation}\label{omega}
1-q^{-1}-q^{-2}<\prod_{i=1}^\infty(1-q^{-i})\le 1-q^{-1}-q^{-2}+q^{-3} \quad \text{for all} \quad q\ge 2.
\end{equation}
We now divide the proof into four cases.
  
\noindent {\sc Case 1:} $T \cong \Aa_n(q)$ with $n\ge2$. 
Here $d=\gcd(n+1,q-1)<q$ (see Table~\ref{T:fact}) so \eqref{omega} yields
\begin{equation}\label{|T|boundAn}
|T| = \frac{q^{\frac{n(n+1)}{2}}}{d} \prod_{i=2}^{n+1} (q^i-1)
= \frac{q^{n(n+2)}}{d} \prod_{i=2}^{n+1} (1-q^{-i})
> \frac{1-q^{-1}-q^{-2}}{q} \cdot q^{n^2+2n}.
\end{equation}
Suppose that a cyclotomic polynomial $\Phi_i(q)$ divides $|T|$. 
According to Table~\ref{T:fact}, we have $1 \le i \le n+1$, $e_1 = n$ and $e_i = \lfloor (n+1)/i \rfloor \le n+1$ for $i \ge 2$. 
We now show that $\Phi_i(q)^{e_i}\le q^{n+1}/(q-1)$. 
This is true when $i=1$ because $(q-1)^{n+1}<q^{n+1}$; if $2\le i\le n+1$ then $\Phi_i(q)$ divides $(q^i-1)/(q-1)$, and so
\[
\Phi_i(q)^{e_i} \le \left( \frac{q^i-1}{q-1} \right)^{(n+1)/i} 
< \frac{(q^i)^{(n+1)/i}}{(q-1)^{(n+1)/i}} \le \frac{q^{n+1}}{q-1}.
\]
Therefore, $Q(T)\le q^{n+1}/(q-1)$, and it follows from~\eqref{|T|boundAn} that
\[ 
Q(T)^n \le \frac{q^{n(n+1)}}{(q-1)^n} = \frac{q^{n(n+2)}}{q^n(q-1)^n}
< \frac{q^{n(n+2)}}{4(q-1)}  \le (1-q^{-1} - q^{-2}) \frac{q^{n(n+2)}}{d} = |T|.
\]
Hence, $Q(T) \le |T|^{1/n}$ as claimed.

\noindent {\sc Case 2:} $T \cong {}^2\Aa_n(q)$. 
Here $d=\gcd(n+1,q+1)\le q+1$, so
\begin{equation} \label{|T|bound2An}
|T| = \frac{q^{n(n+2)}}{d} \prod_{i=2}^{n+1} (1-(-q)^{-i})
>\frac{q^{n(n+2)}}{q+1} (1+q^{-3})\prod_{j=1}^\infty (1-q^{-2j}).
\end{equation}
That is, for the estimate we have omitted the factors in the original product with odd $i>3$.
Using \eqref{omega}, the identity $(1+q^{-3})/(1+q^{-1})=1-q^{-1}+q^{-2}$, and the inequality $1-q^{-1}+q^{-2}\ge 11/16$, we obtain
\[
\frac{1+q^{-3}}{q+1}\prod_{j=1}^\infty (1-q^{-2j})
>\frac{(1+q^{-3})(1-q^{-2}-q^{-4})}{q(1+q^{-1})}
\ge\frac{11}{16}\left(\frac{1-q^{-1}+q^{-2}}{q}\right)>\frac1{2q}.
\]
Together with \eqref{|T|bound2An}, this yields 
\begin{equation} \label{|T|bound2An-2}
|T|>\frac12 q^{n(n+2)-1}.
\end{equation}
We now show that $Q(T)\le q^{2(n+1)}$. 
If a cyclotomic polynomial $\Phi_i(q)$ divides $d|T|$, then $1 \le i \le 2(n+1)$ by Table~\ref{T:fact}.
If $i=1$ then $e_1 = \lfloor (n+1)/2 \rfloor \le (n+1)/2$ (by Table~\ref{T:fact}), so $\Phi_1(q)^{e_1} \le (q-1)^{(n+1)/2}< q^{2(n+1)}$. 
Similarly, when $i=2$, $e_2 = n$ and $\Phi_2(q)^{e_2}< (q^2-1)^n\le q^{2(n+1)}$. 
Finally, if $i \ge 3$ then $e_i \le 2(n+1)/i$, so
\[
\Phi_i(q)^{e_i} < (q^i-1)^{2(n+1)/i}< (q^i)^{2(n+1)/i} = q^{2(n+1)}.
\]
Therefore, we have $Q(T)\le q^{2(n+1)}$, which together with \eqref{|T|bound2An-2} yields
\[
Q(T)^n<q^{2n^2+2n}\le\frac{q^{2n^2+4n-2}}{4}=\left(\frac{q^{2(n+1)-1}}{2}\right)^2
<|T|^2.
\]
Thus, $Q(T)<|T|^{n/2}$ as claimed.

\noindent {\sc Case 3:} $T \cong \Bb_n(q)$ or $\Cc_n(q)$. 
Here $d=\gcd(2,q-1)\le2$, which together with \eqref{omega} gives
\[
|T| 
= \frac{q^{2n^2+n}}{d} \prod_{i=1}^n (1-q^{-2i}) 
> \frac{1-q^{-2}-q^{-4}}{2} \cdot q^{2n^2+n}\ge \frac{11}{32}q^{2n^2+n}
>\frac{q^{2n^2+n}}{3}.
\]
Suppose that $\Phi_i(q)$ divides $d|T|$. 
Then $1 \le i \le 2n$ and $e_i = \lfloor 2n/\lcm(2,i) \rfloor \le 2n/i$ by Table~\ref{T:fact}.
Since $\Phi_i(q)\le q^i-1<q^i$, we have
\[
\Phi_i(q)^{e_i}<(q^i)^{2n/i}=q^{2n}.
\]
Hence, $Q(T)<q^{2n}$, and because $n\ge2$ we have
\[
Q(T)^n<q^{2n^2}<\frac{q^{2n^2+n}}{3}<|T|.
\]
Therefore, $Q(T) \le |T|^{1/n}$ as claimed.
  
\noindent {\sc Case 4:} $T \cong \Dd_n(q)$ or ${}^2\Dd_n(q)$. 
Here $d=\gcd(4,q \pm 1)\le 4$ and
\[
|T| = \frac{q^{n^2-n}(q^n \pm 1)}{d} \prod_{i=1}^{n-1}(q^{2i}-1)
> \frac{q^{2n^2-n}}{4} \prod_{i=1}^\infty(1-q^{-2i})
> \frac{(1-q^{-2}-q^{-4}) q^{2n^2-n}}{4}.
\]
Since $\prod_{i=1}^\infty(1-q^{-2i})>(1-q^{-2}-q^{-4})\ge 11/16$ by \eqref{omega}, we have $|T|>q^{2n^2-n-3}$.
If $\Phi_i(q)$ divides $d|T|$ then $i \le 2n$ and $e_i\le 2n/i$ by Table~\ref{T:fact}, so $\Phi_i(q)^{e_i}\le (q^i-1)^{2n/i}< q^{2n}$. 
Hence, $Q(T)\le q^{2n}$, 
and so
$
Q(T)^n\le q^{2n^2}<(q^{2n^2-n-3})^2<|T|^2.
$
Whence, $Q(T) \le |T|^{2/n}$ as claimed. 
This completes the proof.
\end{proof}

\section{Proof of Theorem~\ref{mainThm}} \label{S3}

\begin{table}[!t]
\caption{Values of $Q(T)$ for the exceptional Lie type groups $T$, and constants $K$, $d_0$ and $q_0$ such that $Q(T)^K\le \frac{d}{d_0}|T|$ for all $q\ge q_0$.} \label{T:Q}
\begin{tabular}{lcccccccccc}
\toprule
$T$ & ${}^2\Bb_2$ & ${}^3\Dd_4$ & $\Ee_6$ & ${}^2\Ee_6$ & $\Ee_7$ & $\Ee_8$ & $\Ff_4$ & ${}^2\Ff_4'$ & $\Gg_2$ & ${}^2\Gg_2$ \\
\midrule
$Q(T)$ & $\Phi_4$ &$\Phi_3^2$&$\Phi_3^3$&$\Phi_2^6$&$\Phi_2^7$&$\Phi_2^8$&$\Phi_2^4$&$\Phi_4^2$&$\Phi_2^2$&$\Phi_6$ \\
$K$&2&6&12&12&18&29&12&6&6&$7/2$\\
$d_0$&1&1&3&3&2&1&1&1&1&1\\
$q_0$&8&4&5&7&9&7&7&8&4&27\\
\bottomrule
\end{tabular}
\end{table}

\begin{proof}[Proof of Theorem~\ref{mainThm}]

As in Section~\ref{S2}, write $d|T|=q^{e_0}\prod_{i=1}^M\Phi_i(q)^{e_i}$ and let $Q(T)$ denote the maximum of $\Phi_1(q)^{e_1},\dots,\Phi_M(q)^{e_M}$. 
Let $r$ be a prime dividing $|T|$ but not $q$, and let $R$ be a Sylow $r$-subgroup of $T$.
It follows from Lemma~\ref{lemma:exponent} that 
\begin{equation} \label{final}
|R|\le Q(T)^{1+\lfloor\log_r(M)\rfloor}.
\end{equation} 
We now seek `large' constants $K$ satisfying $Q(T)^K\le|T|$, in order to deduce the claimed bounds of the form $|R| \le |T|^{(\lfloor\log_r(M)\rfloor+1)/K}$.

Suppose first that $T$ is a classical group. 
Then the values of $M$ may be deduced from the cyclotomic factorisations listed in Table~\ref{T:fact}, and are as listed in both Table~\ref{mainTable} and Table~\ref{T:fact}. 
The values of $K$ appearing in Table~\ref{mainTable} are obtained from Lemma~\ref{QboundClassical}.

Now suppose that $T$ is an exceptional Lie type group. 
Then $d|T|=q^{e_0}\prod_{i=1}^{30}\Phi_i(q)^{e_i}$, where the values of $e_1,\dots, e_{30}$ are listed in \cite{KLST}*{Table~C.2}; in particular, we obtain the values of $M$ appearing in Table~\ref{mainTable}.
The values of $e_0$ appear in the last row of \cite{KLST}*{Table~C.3}, and in \cite{Atlas}*{Table~6}.
By inspecting these factorisations, one finds the value of $i$ such that $Q(T) = \Phi_i(q)^{e_i}$ (and that this value is independent of $q$). 
Table~\ref{T:Q} lists constants $K$, $d_0$ and $q_0$ such that 
\[
\Phi_i(q)^{e_iK} \le \frac{d}{d_0}|T| \quad \text{for all} \quad q\ge q_0.
\] 
Note that $d=d_0=1$ in all cases, except when $T=\Ee_6(q)$, ${}^2\Ee_6(q)$ or $\Ee_7(q)$, where $(d,d_0)=(\gcd(3,q-1),3)$, $(\gcd(3,q+1),3)$ or $(\gcd(2,q-1),2)$, respectively. 
In particular, $d\le d_0$ in all cases, and so
\[
Q(T)^K \le |T| \quad \text{for all} \quad q\ge q_0.
\]
The constants $K$ agree with those in Table~\ref{mainTable}, and so by combining the above bound with \eqref{final} we obtain the claimed bounds $|R| \le |T|^{(\lfloor\log_r(M)\rfloor+1)/K}$ for $q \ge q_0$. 
It remains to consider the cases where $q<q_0$. 
In these cases, we check manually, for each prime $r$ dividing $|T|$ but not $q$, whether the bound $|R| \le |T|^{(\lfloor\log_r(M)\rfloor+1)/K}$ (with $K$ and $M$ as in Table~\ref{mainTable}) holds. 
The exceptions, which were checked both manually and using the {\sc Magma} code available at the first author's website\footnote{\url{http://school.maths.uwa.edu.au/~glasby/ExceptionsCheck.mag}}, are recorded in Table~\ref{mainTable}.
\end{proof}

\begin{remark}
  One can slightly improve the values of $K$ listed
  in Table~\ref{mainTable} in some cases. For example, if
  $T \cong {}^2\Bb_2(q)$ then $K$ can be increased to
  $\log(|{}^2\Bb_2(8)|)/ \log(\Phi_2(8))\approx 2.46$. As such improvements
  seem tedious to achieve and do not change the form of our generic bound $|R| \le |T|^{O(\log_r(\ell)/\ell)}$, where $\ell$ is the
  Lie rank of $T$, we chose not to pursue them here. 
\end{remark}


\section*{Acknowledgements}
All three authors acknowledge support from the Australian Research
Council (ARC) grant DP140100416, and SPG also acknowledges support from DP160102323. 
SPG and TP are grateful to RWTH Aachen University for financial support and hospitality during their respective visits in 2017, when the research leading to this paper was undertaken. 

\end{document}